\date{}
\newtheorem{theorem}{Theorem}
\newtheorem{prop}{Proposition}
\newtheorem{cor}{Corollary}
\theoremstyle{definition}
\newtheorem{defn}{Definition}
\newtheorem{rem}{Remark}
\begin{document}
\title{\bf  The distant graph of the projective line over a finite ring with unity}
\author{Edyta Bartnicka, Andrzej Matra{\'s}}
\maketitle
\begin{abstract}
\noindent
We discuss the projective line $\mathbb{P}(R)$ over a finite associative ring with unity. $\mathbb{P}(R)$ is naturally endowed with the symmetric and anti-reflexive relation "distant". We study the graph of this relation on $\mathbb{P}(R)$ and classify up to isomorphism all distant graphs $G(R, \Delta)$ for rings $R$ up to order $p^5$, $p$ prime.
\end{abstract}
{\bf Keywords:} Projective line over a ring, free cyclic submodules, distant graph, distant relation.

\section{Introduction}
The aim of this paper is to characterize the distant graph $G(R, \Delta)$ of the projective line over any finite ring $R$. It is an undirected, connected graph with the degree of a vertex equal to $|R|$.\newline 
The starting point of our investigation is showing the connection between this graph and the distant graph $G(R/J, \Delta_J)$ of the projective line over the factor ring $R/J$, where $J$ is the Jacobson radical of $R$. To this end we use, introduced by Blunck and Havlicek in \cite{radical}, an equivalence relation, called radical parallelism, on the set of points of the projective line, which determines the interdependence between $\mathbb{P}(R)$ and $\mathbb{P}(R/J)$. Next we describe the graph $G(R/J, \Delta_J)$. Using structures theorems \cite{behrens} on finite rings with unity we get that the graph $G(R/J, \Delta_J)$ is isomorphic to the tensor product of the distant graphs arising from projective lines whose underlying rings are full matrix rings over finite fields. The projective line over any full matrix ring $M_n(q)$, i.e. the ring of $n\times n$ matrices over the finite field $F(q)$ of order $q$, is in bijective correspondence with the Grassmannian $\mathscr G(n,2n,q)$ of $n$-dimensional subspaces of a $2n$-dimensional vector  space over $F(q)$. Then we describe $G(M_2(q), \Delta)$ for any prime power $q$ and we give representatives of two classes of partitions of $G(M_2(2), \Delta)$  on a sum of vertex-disjoint maximal cliques. We also make use of these partitions to show a simple construction of projective space of order $2$, described by Hirschfeld in \cite{hir} in a completely different way. The question still unanswered is, whether a partition of $G\big(M_n(q), \Delta)\big)$ on a sum of vertex-disjoint maximal cliques exists for any $n, q$. However, this partition of $G(R, \Delta)$ for any finite ring R such that $R/J$ is isomorphic to the
direct product of n copies of $F(q)$ is done in the present study, in particular the distant graph of the projective line over any ring of lower triangular matrices over $F(q)$ .\newline
Using the classification of finite rings from \cite{p4}, \cite{p5}, we find all nonisomorphic distant graphs $G(R, \Delta)$ for rings $R$ up to order $p^5$, $p$ prime, in the last section. We also describe the graph $G(R, \Delta)$ in the case of an arbitrary local ring $R$.

\section{Preliminaries}
Throughout this paper we shall only study finite associative rings with 1 ($1\neq 0$). Consider the free left module  $^2R$ over a ring $R$. Let  $(a, b)\in {^2R}$, the set $$R(a, b)=\{(\alpha a, \alpha b); \alpha \in R\}$$ is a {\sl left cyclic submodule of ${^2R}$}. If the equation $(ra, rb)=(0, 0)$ implies that $r=0$, then $R(a, b)$ is called {\sl free}. A pair $(a, b)\in {^2R}$ is called {\sl admissible}, if there exist elements $c, d \in R$ such that $$\left[\begin{array}{cclr}
a&b\\ c&d
\end{array}\right] \in GL_2(R).$$ 
The general linear group $GL_2(R)$ acts in natural way (from the right) on the free left $R$-module $^2R$ and this action is transitive.
\begin{defn}\label{p.line}{\rm \cite{hav1}}
The {\sl projective line over $R$} is the orbit $$\mathbb{P}(R):=R(1, 0)^{GL_2(R)}$$ of the free cyclic submodule $R(1, 0)$ under the action of $GL_2(R)$.
\end{defn}
In other words, the points of $\mathbb{P}(R)$ are those free cyclic submodules $R(a, b)\in {^2R}$  which possess a free cyclic complement, i.e. they are generated by admissible pairs $(a, b)$.

We recall that a pair $(a, b)\in {^2R}$ is {\sl unimodular}, if there exist $x, y\in R$ such that $$ax+by=1.$$
It is known that if $R$ is a ring of stable rank 2, then admissibility and unimodularity are equivalent and $R$ is Dedekind-finite \cite[Remark 2.4]{hav1}. Rings that are finite or commutative satisfy this property, so in case of such rings, the projective line can be described by using unimodular or admissible pairs interchangeably.\newline
A wealth of further references is contained in \cite{chain.g}, \cite{kettengeom}.

\begin{defn}\label{distant}{\rm\cite{radical}}
The point set $\mathbb{P}(R)$ is endowed with the symmetric and anti-reflexive relation {\sl distant} which is defined via the action of $GL_2(R)$ on the set of pairs of points by 
$$\Delta:=\bigl(R(1, 0), R(0, 1)\bigr)^{GL_2(R)}.$$\end{defn}

It means that $$\bigwedge_{ R(a, b), R(c, d)\in \mathbb{P}(R)} R(a, b)\Delta R(c, d)\Leftrightarrow \left[\begin{array}{cclr}a & b \\ c & d \end{array}\right]\in GL_{2}(R).$$
\\
Moreover, $$\bigwedge_{ \varphi \in GL_{2}(R)}\ \ \ \bigwedge_{ R(a, b), R(c, d)\in \mathbb{P}(R)} R(a, b)\Delta R(c, d)\Leftrightarrow  (R(a, b))^{\varphi} \Delta (R(c, d))^{\varphi}.$$
The next relation on $\mathbb{P}(R)$ is connected with the Jacobson radical of $R$, denoted by $J$. It is that two-sided ideal which is the intersection of all the maximal right (or left) ideals of $R$.\\
Namely, in \cite{radical} A. Blunck and H. Havlicek introduced an equivalence relation in the set of pairs of non-distant points called {\sl radical parallelism} $(\parallel)$ as follows: $$R(a, b)\parallel R(c, d)\Leftrightarrow \Delta(R(a, b))=\Delta(R(c, d)),$$ where $\Delta(R(a, b))$ is the set of those points of $\mathbb{P}(R)$ which are distant to $R(a, b) \in \mathbb{P}(R)$. In this case we say that a point $R(a, b)\in \mathbb{P}(R)$ is {\sl radically parallel} to a point $R(c, d)\in \mathbb{P}(R)$.\\
The canonical epimorphism $R\to R/J$ sends any $a\in R$ to $a\mapsto
a+J=:\overline a$. According to (\cite[Theorem 2.2]{radical}) the mapping
\begin{equation*}
    \Phi: R(a, b) \mapsto R/J(\overline a, \overline b)
\end{equation*}
is well defined and it satisfies
\begin{equation*}
    R(a,b)\parallel R(c,d) \Leftrightarrow R/J(\overline{a}, \overline{b})=R/J(\overline{c}, \overline{d}).
\end{equation*}

\begin{rem}\cite{radical}\label{ilorazowy}
Furthermore, we have
$$\bigwedge_{a, b\in R}R(a, b)\in \mathbb{P}(R)\Leftrightarrow R/J(\overline{a}, \overline{b})\in \mathbb{P}(R/J)$$
and $$\bigwedge_{R(a, b), R(c, d)\in \mathbb{P}(R)}R(a, b)\Delta R(c, d)\Leftrightarrow R/J(\overline{a}, \overline{b})\Delta_J R/J(\overline{c}, \overline{d}),$$
where $\Delta_J$ denotes the distant relation on $\mathbb{P}(R/J)$.
\end{rem}

Therefore, the radical parallelism relation determines the connection between projective lines $\mathbb{P}(R)$ and $\mathbb{P}(R/J)$. \newline\\
Since the point set $\mathbb{P}(R)$ is endowed with the distant relation, we can consider $\mathbb{P}(R)$ as the set of vertices $V\bigl(G(R, \Delta)\bigr)$  of the {\sl distant graph} $G(R, \Delta)$, i.e. the undirected graph of the relation $\Delta$. 
Its vertices are joined by an edge if, and only if, they are distant. This graph is connected
and its diameter is less or equal 2 \cite[1.4.2. Proposition]{chain.g}.\newline
One of the basic concepts of graph theory is that of a clique. A {\sl clique} in an undirected graph $G$ is a subset of the vertices such that every two distinct vertices comprise an edge, i.e. the subgraph of $G$ induced by these vertices is complete. A {\sl maximum clique} of a graph $G$ is a clique, such that there is no clique  in $G$ with more vertices. A {\sl maximal clique} is a clique which is not properly contained in any
clique. All maximal cliques in $G(R, \Delta)$ has the same number of vertices, denoted by $\omega\bigl(G(R, \Delta)\bigr)$, and at the same time, they are maximum cliques.\newline 
To describe maximal cliques of the distant graph we make use of the following definitions. 

\begin{defn}
An {\sl $(n-1)$-spread} in the $(2n-1)$-dimensional projective space $PG(2n-1, q)$ over the finite field  with $q$ elements $F(q)$  is a set of $(n-1)$-dimensional subspaces such that each point of $PG(2n-1, q)$ is contained in exactly one element of this set.
\end{defn}

\begin{defn}
An {\sl $(n-1)$-parallelism} in $PG(2n-1, q)$ is a partition of the set of $(n-1)$-dimensional subspaces of $PG(2n-1, q)$ by pairwise disjoint $(n-1)$-spreads.
\end{defn}

\begin{rem}\label{V-PG}
$PG(2n-1, q)$ corresponds to a $2n$-dimensional vector space $V(2n, q)$ over the finite field $F(q)$ , and  $(n-1)$-dimensional subspaces of $PG(2n-1, q)$ correspond to $n$-dimensional subspaces of $V(2n, q)$. Consequently, we can also talk about $n$-spreads and $n$-parallelisms of $V(2n,q)$ rather than $(n-1)$-spreads and $(n-1)$-parallelisms of $PG(2n-1,q)$.
\end{rem}

In our considerations we will often use the following fact about the direct product of projective lines.
\begin{theorem} \label{product}{\rm \cite[6.1.]{hav3}}
Let $R$ be the direct product of rings $R_i, \ i=1, 2, \ldots, n$, i.e. $R=R_1\times R_2\times \cdots \times R_n$. 
Then
$$
\mathbb{P}(R, \Delta)\simeq \mathbb{P}(R_1, \Delta_1)\times \mathbb{P}(R_2, \Delta_2)\times \cdots \times \mathbb{P}(R_n, \Delta_n),
$$
where $\Delta_i$ stands for the distant relation on $\mathbb{P}(R_i)$.
\end{theorem}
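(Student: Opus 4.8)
The plan is to exhibit an explicit coordinatewise bijection between $\mathbb{P}(R)$ and $\prod_{i=1}^{n}\mathbb{P}(R_i)$ and then to check that it transports the distant relation $\Delta$ to the tensor (categorical) product of the relations $\Delta_i$. The adjacency rule in that product --- two tuples are distant exactly when they are distant in \emph{every} coordinate --- is precisely what the matrix characterization of $\Delta$ will deliver, so the isomorphism of relational structures will reduce to a statement about invertibility over a direct product.

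First I would record the ring-theoretic backbone. Since $R=R_1\times\cdots\times R_n$, one has $^2R=\prod_i {}^2R_i$ and $M_2(R)=\prod_i M_2(R_i)$ as rings, and a matrix over a direct product is invertible precisely when each of its components is. Hence $GL_2(R)=\prod_i GL_2(R_i)$, acting componentwise on $^2R$. In particular a pair $(a,b)=\bigl((a_i)_i,(b_i)_i\bigr)$ is admissible over $R$ iff each $(a_i,b_i)$ is admissible over $R_i$: the extending matrices $M_i\in GL_2(R_i)$ assemble into an element of $\prod_i GL_2(R_i)=GL_2(R)$, and conversely. Thus no appeal to unimodularity is needed, and admissibility is preserved in both directions coordinatewise.

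Next I would set up the point correspondence. The key observation is the subset identity
$$R(a,b)=\prod_{i=1}^{n} R_i(a_i,b_i)\quad\text{inside}\quad \textstyle\prod_i {}^2R_i,$$
which holds because letting $\alpha=(\alpha_i)_i$ range over $R=\prod_i R_i$ is the same as choosing each $\alpha_i\in R_i$ independently, so $\{(\alpha a,\alpha b)\}$ is literally the product of the coordinate cyclic submodules. This makes the assignment $\Psi\colon R(a,b)\mapsto\bigl(R_i(a_i,b_i)\bigr)_i$ well defined and injective: equal rectangular subsets of a product have equal (nonempty) factors, recovered by the coordinate projections. Combined with the backbone, $\Psi$ sends $\mathbb{P}(R)$ onto $\prod_i\mathbb{P}(R_i)$ --- admissibility of $(a,b)$ is equivalent to admissibility of every $(a_i,b_i)$ --- so $\Psi$ is a bijection of vertex sets.

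Finally I would transport the relation. By the matrix criterion, $R(a,b)\,\Delta\,R(c,d)$ holds iff $\left[\begin{smallmatrix}a&b\\ c&d\end{smallmatrix}\right]\in GL_2(R)$, which by the backbone holds iff $\left[\begin{smallmatrix}a_i&b_i\\ c_i&d_i\end{smallmatrix}\right]\in GL_2(R_i)$ for every $i$, i.e. iff $R_i(a_i,b_i)\,\Delta_i\,R_i(c_i,d_i)$ for all $i$. This is exactly adjacency of $\Psi\bigl(R(a,b)\bigr)$ and $\Psi\bigl(R(c,d)\bigr)$ in the tensor product, so $\Psi$ is an isomorphism. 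The individual steps are routine; the points deserving the most care are the subset identity $R(a,b)=\prod_i R_i(a_i,b_i)$ and the recognition that it is the \emph{tensor} product (not the Cartesian product) of graphs that appears, the ``distant in all coordinates'' condition being forced by invertibility over a direct product being coordinatewise.
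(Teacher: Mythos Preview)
Your argument is correct and is the natural coordinatewise proof. Note, however, that the paper does not supply its own proof of this theorem: it is quoted from \cite[6.1.]{hav3} and used as a black box, so there is no in-paper argument to compare against. Your approach---identifying $GL_2(R)$ with $\prod_i GL_2(R_i)$, observing that $R(a,b)=\prod_i R_i(a_i,b_i)$ as subsets, and then reading off both the bijection of point sets and the coordinatewise characterization of $\Delta$---is exactly the standard one and matches how the cited reference proceeds.
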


In \cite{ja} was pointed out that in another way to state this is to say
$$
G(R, \Delta)\simeq G(R_1, \Delta_1)\times G(R_2, \Delta_2)\times \cdots \times G(R_n, \Delta_n),
$$ 
what means that the graph $G(R, \Delta)$ is the tensor product of the graphs\break $G(R_1, \Delta_1), G(R_2, \Delta_2), \ldots, G(R_n, \Delta_n)$, i.e., the vertex set of $G(R, \Delta)$ is the Cartesian product of $G(R_1, \Delta_1), G(R_2, \Delta_2), \ldots, G(R_n, \Delta_n)$, and for all\break $(x_1, x_2, \ldots, x_n), (x'_1, x'_2, \ldots, x'_n)\in G(R, \Delta)$ holds
$$(x_1, x_2, \ldots, x_n)\Delta (x'_1, x'_2, \ldots, x'_n)\Leftrightarrow x_1\Delta_1 x'_1, x_2\Delta_2 x'_2, \ldots, x_n\Delta_n x'_n.$$ 

\section{Construction of the distant graph on the projective line}
\label{construction}

In order to describe the distant graph $G(R, \Delta)$ of the projective line over a ring $R$ we show the connection between this graph and the distant graph $G(R/J, \Delta_J)$ of the projective line over the factor ring $R/J$. Next we find the graph $G(R/J, \Delta_J)$.\newline
The points of $\mathbb{P}(R/J)$ are in one-one correspondence with the equivalence classes of the radical parallelism relation on $\mathbb{P}(R)$. Each of these comprises $|J|$ elements. See \cite{radical} for more details.
Write $$\overline{a}=\{a_i; i=1, 2, \ldots, |J|\}$$ for all $a\in R$. For any point $R/J(\overline{a}, \overline{b})\in \mathbb{P}(R/J)$ there exist exactly $|J|$ different points $R(a_i, b_i)\in \mathbb{P}(R)$ such that $R/J(\overline{a_i}, \overline{b_i})=R/J(\overline{a}, \overline{b})$. 
Then the graph $G(R, \Delta)$ is uniquely determined by the Remark \ref{ilorazowy}.
For example, if $R=T(2)$ is the ring of ternions over the field $F(2)$, then the projective line over $T(2)/J$ has a distant graph which is depicted in Figure \ref{a} (left). For better visualisation we only show the vertices and the edges of the graph $G(T(2), \Delta)$ corresponding to those marked with a line in $G(T(2)/J, \Delta_J)$.\break
\begin{figure}[hhh]
\centerline{\includegraphics[width=0.70\textwidth]{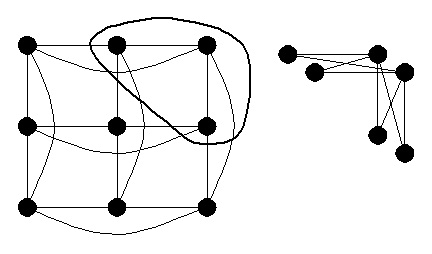}}
\caption{The connection between $G\big(T(2)/J, \Delta_J\big)$ and  $G\big(T(2), \Delta\big)$}
\label{a}
\end{figure}

\begin{prop}\label{kliki.iloraz} 
$G(R/J, \Delta_J)$ has a partition on a sum of $m$ vertex-disjoint maximal cliques if, and only if, $G(R, \Delta)$ has a partition on a sum of $m|J|$ vertex-disjoint maximal cliques.  For these partitions the following equality holds
$$\omega\bigl(G(R/J, \Delta_J)\bigr)=\omega\bigl(G(R, \Delta)\bigr).$$
\end{prop}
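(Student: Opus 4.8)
The plan is to read Remark~\ref{ilorazowy} as saying that the map $\Phi\colon R(a,b)\mapsto R/J(\overline a,\overline b)$ exhibits $G(R,\Delta)$ as a $|J|$-fold blow-up of $G(R/J,\Delta_J)$, and to reduce everything to a clean dictionary between cliques of the two graphs. First I would record the structural facts: $\Phi$ is onto $\mathbb{P}(R/J)$, its fibres are precisely the radical-parallelism classes, each of cardinality $|J|$, and since radically parallel points are never distant each fibre is an independent set in $G(R,\Delta)$; moreover, by the second equivalence in Remark~\ref{ilorazowy}, for $u$ in the fibre over $x$ and $v$ in the fibre over $y$ one has $u\,\Delta\,v$ exactly when $x\,\Delta_J\,y$. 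From this I would prove the clique dictionary: a set $K$ is a maximal clique of $G(R,\Delta)$ if and only if $\Phi|_K$ is injective, $\Phi(K)$ is a maximal clique of $G(R/J,\Delta_J)$, and $K$ meets each fibre over $\Phi(K)$ exactly once. Injectivity is forced by independence of fibres, $\Phi(K)$ is a clique by the displayed equivalence, and maximality transfers both ways because any extra vertex either falls in a fibre already used (impossible) or would enlarge $\Phi(K)$ (impossible). This already yields the final assertion $\omega\bigl(G(R/J,\Delta_J)\bigr)=\omega\bigl(G(R,\Delta)\bigr)$.

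For the forward implication, suppose $G(R/J,\Delta_J)$ is partitioned into $m$ vertex-disjoint maximal cliques $C_1,\dots,C_m$, each of size $\omega$. I would fix, for every point $x\in\mathbb{P}(R/J)$, a labelling $F_x=\{v_x^1,\dots,v_x^{|J|}\}$ of its fibre, and for each $C_j$ and each $k\in\{1,\dots,|J|\}$ set $K_j^k:=\{\,v_x^k:x\in C_j\,\}$. By the dictionary each $K_j^k$ is a maximal clique (a transversal over the maximal clique $C_j$), and for fixed $j$ the cliques $K_j^1,\dots,K_j^{|J|}$ partition $\Phi^{-1}(C_j)$. As the $C_j$ partition $\mathbb{P}(R/J)$, the preimages $\Phi^{-1}(C_j)$ partition $\mathbb{P}(R)$, so the $m|J|$ cliques $K_j^k$ form the desired partition.

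The hard part is the converse. Given a partition of $G(R,\Delta)$ into maximal cliques $K_1,\dots,K_{m|J|}$, applying $\Phi$ produces maximal cliques $D_i:=\Phi(K_i)$ of the base; since each fibre is independent of size $|J|$, its $|J|$ vertices lie in $|J|$ distinct blocks, so the multiset $\{D_1,\dots,D_{m|J|}\}$ covers every point of $\mathbb{P}(R/J)$ exactly $|J|$ times. The task is to extract from this exact $|J|$-fold cover by maximal cliques a genuine partition by $m$ of them. I expect this to be the main obstacle, and it is exactly the point where the distant-graph structure, rather than a general blow-up, must be used: for an arbitrary graph an exact $t$-fold cover by maximum cliques need not contain a sub-partition (for instance $C_5[\overline{K_2}]$ is partitionable into edges while $C_5$ is not, the divisibility $\omega\mid|V|$ failing there), so some homogeneity input is indispensable.

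To carry out the decomposition I would invoke the group action underlying $G(R,\Delta)$. The reduction homomorphism $GL_2(R)\to GL_2(R/J)$ has a kernel $T$ consisting of matrices congruent to the identity modulo $J$; every $\varphi\in T$ preserves $\Delta$ (as all of $GL_2(R)$ does by Definition~\ref{distant}) and fixes $\Phi$ pointwise, hence permutes each fibre within itself. I would first establish that $T$ acts transitively on every fibre, i.e.\ on every radical-parallelism class, which is implicit in the correspondence of \cite{radical}. Using this transitivity I would \emph{straighten} the given partition, transporting it onto a fibre-aligned one with the same underlying base cover, so that each base clique occurring among the $D_i$ occurs with multiplicity exactly $|J|$; the distinct $D_i$ then constitute the sought partition of $G(R/J,\Delta_J)$ into $m$ maximal cliques. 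Proving that an arbitrary maximal-clique partition of the blow-up can be aligned in this way, using jointly the uniform fibre size $|J|$, the uniform clique size $\omega$, and the transitive $T$-action, is the step I anticipate will require the most care.
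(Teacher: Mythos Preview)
Your treatment of the forward implication and of the equality $\omega\bigl(G(R/J,\Delta_J)\bigr)=\omega\bigl(G(R,\Delta)\bigr)$ is correct and is exactly what the paper's one-line proof amounts to: the paper simply invokes Remark~\ref{ilorazowy} together with the fact that every fibre $\Phi^{-1}\bigl(R/J(\overline a,\overline b)\bigr)$ has size $|J|$, and leaves it at that. So on the part the paper actually argues, you and the paper agree.

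Where you go further is the converse, and you are right that it does not follow from the blow-up description alone; your $C_5$ versus $C_5[\overline{K_2}]$ observation is a legitimate warning. The paper does not address this direction separately, and in fact only the forward implication is ever used afterwards (in Theorem~\ref{kliki.radyka�}).

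However, your proposed repair via the congruence subgroup $T=\ker\bigl(GL_2(R)\to GL_2(R/J)\bigr)$ cannot work as stated. Two concrete obstructions:

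\begin{itemize}
\item Any $\varphi\in T$ fixes $\Phi$ pointwise, so $\Phi(K_i^{\varphi})=\Phi(K_i)$ for every clique $K_i$. Hence the multiset $\{D_i\}=\{\Phi(K_i)\}$ is \emph{invariant} under the $T$-action. Acting by $T$ can rearrange the lifts but can never change which base cliques occur or with what multiplicity, so ``straightening'' by $T$ cannot force the multiplicities to become $|J|$.
\item Worse, the multiplicity claim itself is false. Take $R/J\cong F(2)\times F(2)$, so $G(R/J,\Delta_J)\cong K_3\otimes K_3$ on the nine vertices $(i,j)$, $i,j\in\{0,1,2\}$, whose six maximal cliques are the graphs of the six permutations of $\{0,1,2\}$. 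With $|J|=2$ the blow-up has $18$ vertices $(i,j,k)$, $k\in\{0,1\}$. The six triangles
\[
\begin{aligned}
&\{(0,0,0),(1,1,0),(2,2,0)\},\ \{(0,1,0),(1,2,0),(2,0,0)\},\ \{(0,2,0),(1,0,0),(2,1,0)\},\\
&\{(0,0,1),(1,2,1),(2,1,1)\},\ \{(0,2,1),(1,1,1),(2,0,1)\},\ \{(0,1,1),(1,0,1),(2,2,1)\}
\end{aligned}
\]
form a partition of $G(R,\Delta)$ into maximal cliques whose projections are the six permutations \emph{each with multiplicity one}, not $|J|=2$. (A sub-partition downstairs still exists here, namely the three cyclic permutations, but not for the reason you propose.)
\end{itemize}

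So the step you flagged as needing ``the most care'' is not merely delicate; the mechanism you outline is blocked. If you want to rescue the converse you will need a genuinely different argument---for instance, showing directly that the exact $|J|$-fold cover $\{D_i\}$ of $\mathbb P(R/J)$ by maximal cliques always contains a sub-partition, using structural information about maximal cliques of $G(R/J,\Delta_J)$ (spreads) beyond what a generic blow-up provides. The paper does not supply such an argument either.
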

\begin{proof}
This follows from Remark \ref{ilorazowy} and the the fact that $\Phi^{-1}\bigl(R/J(\overline a, \overline b)\bigr)$ containes exactly $|J|$ points  for any $(R/J(\overline a, \overline b)$.
\end{proof}

\begin{prop}\label{kliki.produkt}
Let $G_i$ be a graph such that $V(G_i)$ is a sum of vertex-disjoint maximal cliques $K_{t_i}, t_i=1, \ldots, m_i$, of the same cardinality, i.e. $|K_{t_i}|=s_i$, for all $i=1, \ldots, n$. Write $\min\{s_i; i=1, \ldots, n\}=s$. 
The graph $\bigotimes_{i=1}^nG_i$ has a partition on a sum of $\frac{m_1 \cdots m_ns_1\cdots s_n}{s}$ vertex-disjoint maximal cliques with $\omega(\bigotimes_{i=1}^nG_i)=s$.
\end{prop}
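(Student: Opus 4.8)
The plan is to reduce the statement to partitioning a single tensor product of complete graphs, to carry out that partition by an explicit cyclic ``shifted diagonal'' construction, and then to assemble the pieces box by box. First I would fix, for each $i$, the given decomposition $V(G_i)=\bigsqcup_{a_i=1}^{m_i}C_i^{a_i}$ into vertex-disjoint maximal cliques $C_i^{a_i}$ of size $s_i$. Taking products, the vertex set of $\bigotimes_{i=1}^nG_i$ splits as the disjoint union of the ``boxes'' $C_1^{a_1}\times\cdots\times C_n^{a_n}$ indexed by $(a_1,\ldots,a_n)\in\prod_i\{1,\ldots,m_i\}$, of which there are $m_1\cdots m_n$. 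Since each $C_i^{a_i}$ is a clique, two vertices lying in the same box are distant in the tensor product exactly when they differ in every coordinate, so the subgraph induced on each box is isomorphic to $K_{s_1}\otimes\cdots\otimes K_{s_n}$. It therefore suffices to partition this one product of complete graphs into cliques of size $s$ and then copy that partition inside every box.

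For the product of complete graphs I would identify its vertex set with $\mathbb{Z}_{s_1}\times\cdots\times\mathbb{Z}_{s_n}$ and, after reindexing so that $s_1=s=\min_i s_i$, define for each $(c_2,\ldots,c_n)\in\mathbb{Z}_{s_2}\times\cdots\times\mathbb{Z}_{s_n}$ the set
$$Q_{(c_2,\ldots,c_n)}=\bigl\{\,\bigl(t,\ (c_2+t)\bmod s_2,\ \ldots,\ (c_n+t)\bmod s_n\bigr)\ :\ t=0,1,\ldots,s_1-1\,\bigr\}.$$
Each such $Q$ has $s_1=s$ elements, and any two of them (arising from $t\neq t'$ with $t,t'\in\{0,\ldots,s_1-1\}$) differ in every coordinate: in the first because $t\neq t'$, and in the $i$-th because $|t-t'|<s_1\le s_i$ forces $t\not\equiv t'\pmod{s_i}$. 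Hence each $Q$ is a clique. Conversely, a prescribed tuple $(v_1,\ldots,v_n)$ forces $t=v_1$ and then $c_i=(v_i-v_1)\bmod s_i$, so it lies in exactly one $Q$; thus the sets $Q$ partition $\mathbb{Z}_{s_1}\times\cdots\times\mathbb{Z}_{s_n}$ into $s_2\cdots s_n=\frac{s_1\cdots s_n}{s}$ cliques of size $s$.

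Next I would identify $s$ as the clique number and count. For the upper bound I would use the standard projection argument for tensor products: if $S$ is a clique of $\bigotimes_iG_i$, then its members have pairwise distinct and mutually adjacent $i$-th coordinates, so $\pi_i$ maps $S$ injectively onto a clique of $G_i$; hence $|S|\le\omega(G_i)=s_i$ for every $i$ (here I use the property recorded earlier that for these graphs every maximal clique is already a maximum clique, so $\omega(G_i)=s_i$), giving $\omega\bigl(\bigotimes_iG_i\bigr)\le s$. Since the sets $Q$ have size $s$, equality holds, and consequently every $Q$ is a maximum, hence maximal, clique of the whole graph. Collecting the partitions obtained inside all $m_1\cdots m_n$ boxes then yields a partition of $V\bigl(\bigotimes_iG_i\bigr)$ into $m_1\cdots m_n\cdot\frac{s_1\cdots s_n}{s}=\frac{m_1\cdots m_n s_1\cdots s_n}{s}$ vertex-disjoint maximal cliques with $\omega\bigl(\bigotimes_iG_i\bigr)=s$, as claimed.

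The hard part will be the middle step, namely producing an explicit clique partition of $K_{s_1}\otimes\cdots\otimes K_{s_n}$ all of whose blocks have the same size, because the factors have unequal orders and only the smallest one, $s$, can serve as the uniform clique size. The cyclic formula above is exactly what reconciles the clique condition with the partition condition, and it works precisely because $s_1\le s_i$ for all $i$; once this combinatorial core is in place, the box decomposition and the projection bound are routine.
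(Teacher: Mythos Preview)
Your proof is correct and follows essentially the same route as the paper: decompose the vertex set into the boxes $K_{t_1}\times\cdots\times K_{t_n}$ and partition each box by cyclic ``shifted diagonals.'' The paper writes out the $n=2$ case explicitly (its rows are exactly your sets $Q_{(c_2,\ldots,c_n)}$) and then appeals to induction, whereas you handle general $n$ in one stroke and supply the projection bound for $\omega$ that the paper's argument leaves implicit.
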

\begin{proof}
Suppose that the above assumptions are satisfied. 
We have $$V(G_1)\times\cdots\times V(G_n)=
\bigcup_{t_1=1}^{m_1}K_{t_1}\times\cdots\times \bigcup_{t_n=1}^{m_n}K_{t_n}=\bigcup_{t_1=1}^{m_1}\ldots
\bigcup_{t_n=1}^{m_n}K_{t_1}\times\cdots\times K_{t_n}.$$
We first give the proof for the case $n=2$. Without loss of generality, assume that $s_1\leqslant s_2.$ 
Let $(k_{l_1}, k_{l_2}),\ l_1=1, \ldots, s_1,\ l_2=1, \ldots, s_2$, be  vertices of $K_{t_1}\times K_{t_2}$. 
For all ${t_1}=1,\ldots, m_1$, ${t_2}=1,\ldots, m_2$, $K_{t_1}\times K_{t_2}$ is equal to the set\newline\\
$\left\{\begin{array}{cclr}
(k_1, k_1), (k_2, k_2), \ldots, (k_{s_1-1}, k_{s_1-1}), (k_{s_1}, k_{s_1}),\\ (k_1, k_2), (k_2, k_3), \ldots, (k_{s_1-1}, k_{s_1}), (k_{s_1}, k_{1}),\\ 
\vdots\\
(k_1, k_{s_1}), (k_2, k_1), \ldots, (k_{s_1-1}, k_{s_1-2}), (k_{s_1}, k_{s_1-1})
\end{array}\right\} $
in case of $s_1=s_2$,\break

and 
$$\left\{\begin{array}{cclr}
(k_1, k_1), (k_2, k_2), \ldots, (k_{s_1-1}, k_{s_1-1}), (k_{s_1}, k_{s_1}),\\ 
(k_1, k_2), (k_2, k_3), \ldots, (k_{s_1-1}, k_{s_1}), (k_{s_1}, k_{s_1+1}),\\ 
\vdots\\
(k_1, k_{s_2-s_1+1}), (k_2, k_{s_2-s_1+2}), \ldots, (k_{s_1-1}, k_{s_2-1}), (k_{s_1}, k_{s_2})\\
(k_1, k_{s_2-s_1+2}), (k_2, k_{s_2-s_1+3}), \ldots, (k_{s_1-1}, k_{s_2}), (k_{s_1}, k_{1})\\
\vdots\\
(k_1, k_{s_2}), (k_2, k_1), \ldots, (k_{s_1-1}, k_{s_1-2}), (k_{s_1}, k_{s_1-1})\\
\end{array}\right\}$$
if $s_1\lneq s_2$. \newline

By the definition of the tensor product of graphs $G_1, G_2$ we get that vertices 
$(k_{l_1}, k_{l_2}), (k_{l'_1}, k_{l'_2})$ in $G_1\times G_2$ are joined by an edge if, and only if, $k_{l_1}$ and $k_{l'_1}$, $k_{l_2}$ and $k_{l'_2}$ comprise edges in $G_1, G_2$ respectively.
All vertices $k_{l_i}, k_{l'_i}$ are elements of the clique $K_{t_i}$, and so they are joined by an edge if, and only if, $l_i\neq l'_i.$
Therefore vertices writed down in rows of the above sets $K_{t_1}\times K_{t_2}$ are maximal cliques in $G_1\times G_2$.
If $K_{t_1}, K_{t'_1}\in \bigcup_{K_{t_1}}$ and $t_1\neq t'_1$, then there is no any vertex in $K_{t_1}\times K_{t_2}$ forming edges with all elements of some vertex-disjoint maximal clique of $K_{t'_1}\times K_{t'_2}$.\newline
Thus $\bigcup_{t_1=1}^{m_1}\bigcup_{t_2=1}^{m_2}K_{t_1}\times K_{t_2}=V(G_1\times G_2)$ is a sum of $m_1m_2s_2$ vertex-disjoint maximal cliques with $s_1$ elements.\newline 
Applying the induction we get the claim.
\end{proof}
\begin{theorem}\label{kliki.radyka³}
Let $R$ be a finite ring such that $R/J$ is isomorphic to $R_1\times \cdots\times R_n$ and  $V\bigl(G(R_i, \Delta_i)\bigr)$ is a sum of $m_i$ vertex-disjoint maximal cliques $K_i$ with $|K_i|=s_i$ for all $i=1, \ldots, n$ and let $\min\{s_i; i=1, \ldots, n\}=s$. 
There exists a partition of $G(R, \Delta)$ on a sum of $\frac{m_1 \cdots m_ns_1\cdots s_n}{s}|J|$  vertex-disjoint maximal cliques with $\omega\bigl(G(R, \Delta)\bigr)=s$.
 \end{theorem}
\begin{proof}
In view of Theorem \ref{product}, we have $G(R/J, \Delta_J)\simeq G(R_1, \Delta_1)\times G(R_2, \Delta_2)\times \cdots \times G(R_n, \Delta_n)$.\newline 
By Proposition \ref{kliki.produkt}, $V\bigl(G(R/J, \Delta_J)\bigr)$ is a sum of $m_1 \cdots m_ns_1\cdots s_n$ vertex-disjoint maximal cliques with $\omega\bigl(G(R, \Delta)\bigr)=s$. 
Proposition \ref{kliki.iloraz} now yields to desired claim. 
\end{proof}
\begin{cor}
Let $R$ be a ring such that $R/J$ is isomorphic to the direct product of $n$ copies  of $F(q)$. There exists a partition of the distant graph $G(R, \Delta)$ on a sum of $(q+1)^{n-1}|J|$ vertex-disjoint maximal cliques with $\omega\bigl(G(R, \Delta)\bigr) =q+1$. The ring of lower triangular $n\times n$ matrices over the field $F(q)$ is one example of such rings and $|J|=q^{\frac{n^2-n}{2}}$ in this case.
\end{cor}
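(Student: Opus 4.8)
The plan is to obtain this as a direct specialization of the preceding Theorem, so the work splits into two parts: applying the general clique-partition formula to the case where every factor is $F(q)$, and then verifying that the lower triangular matrix ring genuinely satisfies the hypothesis $R/J \cong F(q)^n$ with the claimed radical order.

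First I would record the structure of the distant graph over a field. Since $F(q)$ is a field, every nonzero pair is admissible, and two distinct points of $\mathbb{P}(F(q))$ are always distant, because distinct one-dimensional subspaces of $^2F(q)$ are automatically linearly independent. Thus $\mathbb{P}(F(q))$ has exactly $q+1$ points and $G(F(q), \Delta_i)$ is the complete graph $K_{q+1}$. In the language of the preceding Theorem this means each factor $G(R_i, \Delta_i)$ is itself a single maximal clique, so $m_i = 1$ and $s_i = q+1$ for every $i$, whence $s = \min_i s_i = q+1$.

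Next I would substitute these values into the partition count supplied by the preceding Theorem, which then yields a partition of $G(R,\Delta)$ into
$$\frac{m_1\cdots m_n \, s_1\cdots s_n}{s}\,|J| = \frac{(q+1)^n}{q+1}\,|J| = (q+1)^{n-1}|J|$$
vertex-disjoint maximal cliques, with $\omega\bigl(G(R,\Delta)\bigr) = s = q+1$. This establishes the first assertion for every ring $R$ with $R/J \cong F(q)^n$.

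It remains to treat the example. Let $R$ be the ring of lower triangular $n \times n$ matrices over $F(q)$ and let $J$ be the set of strictly lower triangular matrices (those with zero diagonal). I would first confirm that $J$ is indeed the Jacobson radical: $J$ is a two-sided ideal which is nilpotent ($J^n = 0$), hence contained in the radical, while the diagonal map $R \to F(q)^n$ sending a matrix to its diagonal is a surjective ring homomorphism—since the product of lower triangular matrices has diagonal equal to the componentwise product of the diagonals—with kernel exactly $J$; because $F(q)^n$ is semisimple, the radical is contained in $J$, and equality follows. This simultaneously exhibits the isomorphism $R/J \cong F(q)^n$ needed to invoke the first part. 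Finally, counting the free entries strictly below the diagonal gives $|J| = q^{\binom{n}{2}} = q^{(n^2-n)/2}$. The entire argument is routine once the general theorem is available; the one point demanding genuine care is this identification of the Jacobson radical of the triangular matrix ring, namely checking both that the strictly lower triangular matrices form a nilpotent ideal and that the diagonal map is multiplicative, so that the quotient is exactly the product of $n$ copies of $F(q)$.
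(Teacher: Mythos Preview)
Your proposal is correct and matches the paper's approach exactly: the paper states this as a corollary with no proof, treating it as an immediate specialization of the preceding Theorem with $m_i=1$, $s_i=q+1$, $s=q+1$. Your explicit verification that $G(F(q),\Delta_i)\cong K_{q+1}$ and your identification of the Jacobson radical of the lower triangular matrix ring simply spell out details the paper leaves to the reader.
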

\begin{theorem}\label{izo.graph}
Let $R, R'$ be finite rings. $G(R, \Delta)$ and $G(R', \Delta ')$ are isomorphic if, and only if, $|R|=|R'|$ and  $R/J=\prod_{i=1}^nR_i,\ R'/J'=\prod_{i=1}^{n}R_{\sigma(i)}$, where $R_i, R_{\sigma(i)}$ are full matrix rings over finite fields, with a permutation $\sigma$ of $\{1, 2, \ldots, n\}$ such that $\alpha_i:R_i\rightarrow R_{\sigma(i)}$ is an isomorphism or an anti-isomorphism.
\end{theorem}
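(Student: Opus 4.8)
The plan is to isolate two invariants of $G(R,\Delta)$ that together form a complete invariant, and then to translate the stated ring-theoretic condition into them. Since $G(R,\Delta)$ is regular of degree $|R|$, the order of $R$ is read off the graph, so $G(R,\Delta)\simeq G(R',\Delta')$ already forces $|R|=|R'|$. Next, radical parallelism is, by its very definition $R(a,b)\parallel R(c,d)\Leftrightarrow \Delta(R(a,b))=\Delta(R(c,d))$, nothing but the relation ``having equal neighbourhoods'' in $G(R,\Delta)$; hence the parallelism classes are a purely graph-theoretic datum (the classes of non-adjacent twins, which are automatically independent sets). By Remark~\ref{ilorazowy} each class has exactly $|J|$ vertices, collapsing each class to a point gives $G(R/J,\Delta_J)$, and since $J(R/J)=0$ this quotient has only singleton twin classes. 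Thus $G(R,\Delta)$ is the blow-up of $G(R/J,\Delta_J)$ by independent sets of the uniform size $|J|$, and conversely the twin-quotient together with the common twin-class size recovers the pair $\bigl(G(R/J,\Delta_J),|J|\bigr)$. Consequently
$$G(R,\Delta)\simeq G(R',\Delta')\iff |J|=|J'|\ \text{and}\ G(R/J,\Delta_J)\simeq G(R'/J',\Delta_{J'}),$$
which reduces everything to the semisimple case.

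For sufficiency I would assume the displayed factor condition together with $|R|=|R'|$. Because a finite full matrix ring over a field satisfies $M_n(q)^{\mathrm{op}}\simeq M_n(q)$ via transposition, an anti-isomorphism $\alpha_i\colon R_i\to R_{\sigma(i)}$ already yields an honest ring isomorphism $R_i\simeq R_{\sigma(i)}$; so in every case $G(R_i,\Delta_i)\simeq G(R_{\sigma(i)},\Delta_{\sigma(i)})$. Using Theorem~\ref{product} and the commutativity of the tensor product under $\sigma$ I obtain $G(R/J,\Delta_J)\simeq\bigotimes_i G(R_i,\Delta_i)\simeq\bigotimes_i G(R_{\sigma(i)},\Delta_{\sigma(i)})\simeq G(R'/J',\Delta_{J'})$. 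Finally $|R/J|=\prod_i|R_i|=\prod_i|R_{\sigma(i)}|=|R'/J'|$, so $|R|=|R'|$ gives $|J|=|J'|$, and the reduction above closes this direction.

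For necessity the reduction leaves the semisimple statement: if $S=\prod_i M_{n_i}(q_i)$ and $S'=\prod_j M_{n'_j}(q'_j)$ are semisimple with $G(S,\Delta)\simeq G(S',\Delta')$, then the two families of matrix factors agree up to a permutation and up to isomorphism (equivalently anti-isomorphism). Here I would pass to the Grassmannian model recalled in the introduction: $\mathbb{P}(M_n(q))\leftrightarrow\mathscr G(n,2n,q)$ with ``distant'' meaning ``complementary'', so each simple factor contributes the corresponding Grassmann graph and, by Theorem~\ref{product}, $G(S,\Delta)$ is their tensor product. The parameters $(n_i,q_i)$ must be recovered from the isomorphism type of this tensor product, and the unavoidable isomorphism/anti-isomorphism ambiguity is precisely the collineation/correlation ambiguity of the Grassmann graph, i.e.\ the transpose anti-automorphism.

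The main obstacle is exactly this last recovery. Since the individual Grassmann factors need not be prime for the tensor product, one cannot merely quote the unique factorisation of connected non-bipartite graphs; instead I expect to have to reconstruct the simple components intrinsically, reading the factors $\mathbb{P}(M_{n_i}(q_i))$ off $G(S,\Delta)$ as the ``direct factors'' of the underlying point geometry and then invoking a Chow-type rigidity theorem (the fundamental theorem of projective geometry for Grassmann spaces) to pin each factor down to $M_{n_i}(q_i)$ up to isomorphism or anti-isomorphism. Once the factor multisets coincide, the identification of the factors together with $|R|=|R'|$ (already furnished by regularity in the first step) delivers the right-hand side of the theorem.
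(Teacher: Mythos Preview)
Your reduction to the semisimple case via the twin relation (radical parallelism $=$ ``equal open neighbourhoods'', hence a purely graph-theoretic datum) is correct and in fact more explicit than the paper's bare appeal to ``the connection between $G(R/J,\Delta_J)$ and $G(R,\Delta)$''. Your sufficiency argument is essentially the paper's ``$\Leftarrow$'' direction: (anti-)isomorphic simple factors give isomorphic factor graphs, Theorem~\ref{product} assembles them, and the blow-up by $|J|=|J'|$ independent sets finishes.

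The gap is exactly where you place it. For necessity you must recover the multiset of simple factors $M_{n_i}(q_i)$ from the isomorphism type of $\bigotimes_i G(M_{n_i}(q_i),\Delta_i)$, and your sketch (intrinsic reconstruction of the ``direct factors'' of the point geometry, followed by a Chow-type rigidity theorem on each Grassmann piece) is not carried out; you yourself note that unique tensor-factorisation of graphs cannot be invoked directly, and no substitute argument is supplied. The paper does not attempt this step at all: its entire ``$\Rightarrow$'' direction is the single sentence ``This is straightforward from \cite[Corollary~6.8]{hav3}'', i.e.\ it quotes Blunck and Havlicek's classification of distant-isomorphisms of projective lines, which for finite rings yields precisely the factor-by-factor isomorphism/anti-isomorphism condition on $R/J$ and $R'/J'$. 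So either cite that external result, as the paper does, or actually complete your direct reconstruction; as it stands, your necessity direction is a plausible programme rather than a proof.
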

 \begin{proof}
$"\Rightarrow"$ This is straightforward from \cite[Corollary 6.8]{hav3}.\newline
$"\Leftarrow"$  An isomorphism or an anti-isomorphism $\alpha_i:R_i\rightarrow R_{\sigma(i)}$ gives $G(R_i, 
\Delta_i)\simeq G(R_{\sigma(i)}, \Delta_{\sigma(i)})$ for all $i=1, \ldots, n$. Hence $G(R/J, \Delta_J)$ and $G(R'/J', \Delta_{J'}')$ are isomorphic and from the connection between $G(R/J, \Delta_J)$ and $G(R, \Delta)$ we get an isomorphism of $G(R, \Delta)$ and $G(R', \Delta ')$.
\end{proof}

Any finite ring with identity is semiperfect. By the structure theorem of such rings \cite{behrens} $R/J$ is artinian semisimple and idempotents lift modulo $J$.
Hence it has a unique decomposition into a direct product of simple rings:
$$R/J\backsimeq \overline{R_1}\times\overline{R_2}\times\cdots\times\overline{R_m}.$$
According to Theorem \ref{product} we get: $$G(R/J, \Delta_J)\backsimeq\bigotimes_{i=1}^m G(\overline{R_i}, \Delta_i).$$
Any simple ring $\overline{R_i}$ is isomorphic to a full matrix ring $M_{n_i}(q_i)$ over the finite field with $q_i$ elements:
$$G(R/J, \Delta_J)\backsimeq\bigotimes_{i=1}^m G(M_{n_i}(q_i), \Delta_i).$$
It follows then that the description of the projective line over any finite
ring can be based on the projective line over the full matrix ring $\mathbb{P}(M_n(q))$. There is a bijection between $\mathbb{P}(M_n(q))$ and the Grassmannian $\mathscr{G}(n, 2n, q)$, i.e.  the set of all $n$-dimensional subspaces of $V(2n, q)$  \cite[2.4 Theorem.]{spreads}. Consequently, any point  of $\mathbb{P}(M_n(q))$ can be expressed by using of a basis of the corresponding n-dimensional subspace of $V(2n, q)$. The point
\medskip
$$M_n(q)\left(\left[\begin{array}{cclr}
q_{11}&q_{12}&\ldots&q_{1n}\\q_{21}&q_{22}&\ldots&q_{2n}\\ \vdots &\vdots&\vdots&\vdots\\ q_{n1}&q_{n2}&\ldots&q_{nn}
\end{array}\right], \left[\begin{array}{cclr}
q'_{11}&q'_{12}&\ldots&q'_{1n}\\q'_{21}&q'_{22}&\ldots&q'_{2n}\\ \vdots &\vdots&\vdots&\vdots\\ q'_{n1}&q'_{n2}&\ldots&q'_{nn}
\end{array}\right]\right)$$

 corresponds, for instance, to the system of vectors 

$$\begin{array}{cclrcccc}
(q_{11}&q_{12}&\ldots&q_{1n}&q'_{11}&q'_{12}&\ldots&q'_{1n})\\(q_{21}&q_{22}&\ldots&q_{2n}&q'_{21}&q'_{22}&\ldots&q'_{2n})\\ \vdots &\vdots&\vdots&\vdots& \vdots &\vdots&\vdots&\vdots\\ (q_{n1}&q_{n2}&\ldots&q_{nn}&q'_{n1}&q'_{n2}&\ldots&q'_{nn})
\end{array}.$$

The distant graph of the projective line over the full matrix ring $G(M_n(q), \Delta)$, is isomorphic to the graph on $\mathscr{G}(n, 2n, q)$ whose vertex set is $\mathscr{G}(n, 2n, q)$ and whose edges are pairs of complementary subspaces $X, Y \in \mathscr{G}(n, 2n, q)$:
$$\bigwedge_{X, Y \in \mathscr{G}(n, 2n, q)}X\Delta Y \Leftrightarrow X\oplus Y=V(2n, q).$$

Another graph on $\mathscr{G}(n, 2n, q)$ is the well known Grassmann graph, which has the same set of vertices as the distant graph but $X, Y \in \mathscr{G}(n, 2n, q)$ form an edge, whenever both $X$ and $Y$ have codimension  $1$ in $X+Y$, i.e. they are {\sl adjacent} (in symbols: $\sim$ ):
$$\bigwedge_{X, Y \in \mathscr{G}(n, 2n, q)}X\sim Y \Leftrightarrow \dim\big((X+Y)/X\big)=\dim\big((X+Y)/Y\big)=1.$$
$G(M_n(q), \Delta)$ can be described using the notion of the Grassmann graph \cite[Theorem 3.2]{hav 4}. These graphs have been thoroughly investigated by different authors (see for
example \cite{metsch}), however, this special case of the Grassmann graph $\mathscr{G}(n, 2n, q)$ is not characterized.\newline
We can give the number of vertices of $G(M_n(q), \Delta)$ (cf. \cite[p. 920]{design}), i.e. the number of $n$-dimensional subspaces of $V(2n, q)$:
$$|V\bigl(G(M_n(q), \Delta\bigr)|=\frac{(q^{2n}-1)(q^{2n}-q)\ldots(q^{2n}-q^{n-1})}{(q^n-1)(q^n-q)\ldots(q^n-q^{n-1})}=\left[\begin{array}{cclr}
2n \\n
\end{array}\right]_q.$$

The degree of a vertex $v\in G(M_n(q), \Delta)$ is equal to the number of $n$-dimensional subspaces of $V(2n, q)$ that are disjoint to any $n$-dimensional subspace:
$$\deg(\upsilon)=\frac{(q^{2n}-q^n)(q^{2n}-q^{n+1})\ldots(q^{2n}-q^{2n-1})}{(q^n-1)(q^n-q)\ldots(q^n-q^{n-1})}=q^{n^2}.$$
It means that $\deg(v)=|M_n(q)|$ and generally if $v\in G(R, \Delta)$ then $\deg(\upsilon)=|R|$, which is also due to the fact that $GL_2(R)$ acts transitively on $\mathbb{P}(R)$. 
Maximal cliques in $G(M_n(q), \Delta)$ correspond to $n$-spreads in the $2n$-dimensional vector space over $F$. It is known that such a $n$-spread containes $q^n+1$ $n$-dimensional vector subspaces. 
Any partition of the distant graph $G(M_n(q), \Delta)$ on a sum of vertex-disjoint maximal cliques corresponds to 
an $n$-parallelism of the vector space $V(2n, q)$. Therefore and on account of \cite[Theorem 1.]{parallelism}, which has been also proved (independently) by Denniston \cite{packing}, there exists a partition of the distant graph $G(M_2(q), \Delta)$ on a sum of $q^2+q+1$ vertex-disjoint maximal cliques with  $\omega\bigl(G(M_2(q), \Delta)\bigr)=q^2+1$ for any $q$.\break

We pay attention now to the distant graph $G(M_2(2), \Delta)$ which has 35 vertices. 
 \begin{theorem}
The distant graph $G(M_2(2), \Delta)$ has $240$ distinct partitions on a sum of vertex-disjoint maximal cliques. They fall into two conjugacy classes of $120$ each under the action of the linear automorphism group of $G(M_2(2), \Delta)$.
\end{theorem}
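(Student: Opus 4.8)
The plan is to translate the statement into the language of parallelisms of $PG(3,2)$ and then run an orbit--stabilizer count under the two groups that act. \emph{Reduction.} As recorded above, the vertices of $G(M_2(2),\Delta)$ are the $35$ lines of $PG(3,2)$ (the $2$-dimensional subspaces of $V(4,2)$), the maximal cliques are the $2$-spreads (sets of $5$ pairwise skew lines, $\omega=5$), and a partition into vertex-disjoint maximal cliques is precisely a set of $7$ pairwise disjoint spreads. Since $7\cdot 5=35$, any $7$ pairwise disjoint spreads automatically exhaust the line set, so such a partition is exactly a $2$-parallelism (packing) of $V(4,2)$. Thus the theorem asserts that $PG(3,2)$ has exactly $240$ packings, splitting into two orbits of $120$.

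\emph{The two groups.} I would next pin down the groups acting on packings. The linear automorphism group is the one induced on $\mathbb{P}(M_2(2))$ by $GL_2(M_2(2))=GL_4(2)$; over $\mathbb{F}_2$ there are no nontrivial scalars, so this action is faithful and the linear group is $\mathcal{A}\cong GL_4(2)\cong A_8$ of order $20160$. Since two distinct $2$-subspaces of $V(4,2)$ meet in either a point or trivially, the distant graph is the complement of the Grassmann graph $\mathscr{G}(2,4,2)$; by Chow's theorem in the self-dual case (and the automorphism description already cited in the excerpt) its full automorphism group is $\mathcal{A}$ extended by the duality $\delta$ of $PG(3,2)$, giving $\widehat{\mathcal{A}}\cong S_8$ of order $40320$, with $\delta$ inducing the outer automorphism of $A_8$.

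\emph{Orbit--stabilizer and completeness.} The core step is to exhibit one packing $\mathcal{P}$ and compute $\mathrm{Stab}_{\mathcal{A}}(\mathcal{P})$. I would produce $\mathcal{P}$ concretely, for instance from the embedding $GL(3,2)=PSL(2,7)\hookrightarrow GL(4,2)$ afforded by the module $V_3\oplus\mathbb{F}_2$ (natural plus trivial), whose trivial summand is a fixed point and whose $V_3$ is a fixed complementary hyperplane, and verify that this $PSL(2,7)$ of order $168$ is exactly the full stabilizer of an invariant packing. Then the $\mathcal{A}$-orbit of $\mathcal{P}$ has size $20160/168=120$. Because $\mathrm{Stab}_{\mathcal{A}}(\delta\mathcal{P})=\delta\,\mathrm{Stab}_{\mathcal{A}}(\mathcal{P})\,\delta^{-1}$ lies in the \emph{other} $A_8$-class of $PSL(2,7)$-subgroups (the two classes are fused only by the outer automorphism, i.e.\ by $\delta$), the packing $\delta\mathcal{P}$ sits in a second $\mathcal{A}$-orbit, again of size $120$. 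To finish I would show that $\mathcal{P}\mapsto\mathrm{Stab}_{\mathcal{A}}(\mathcal{P})$ is a bijection onto the subgroups of $A_8$ isomorphic to $PSL(2,7)$: each packing has such a stabilizer, and each such subgroup fixes a unique packing. As $A_8$ has exactly two conjugacy classes of $PSL(2,7)$-subgroups, each maximal and hence self-normalizing of cardinality $20160/168=120$, this gives exactly $240$ packings in two $\mathcal{A}$-orbits of $120$, fused into a single $\widehat{\mathcal{A}}\cong S_8$-orbit by $\delta$.

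\emph{Main obstacle.} I expect the decisive difficulty to be exactly this completeness and uniqueness: showing that a given $PSL(2,7)$ stabilizes a \emph{unique} packing, and that no packing has stabilizer larger or smaller than $168$ (equivalently, that $\widehat{\mathcal{A}}\cong S_8$ is transitive on packings with point-stabilizer of order $168$). This is where one must either invoke the classical enumeration of the $240$ packings of $PG(3,2)$ or carry out a finite search over the $56$ spreads, counting the $7$-cliques of the disjointness relation; once this is in hand, everything else --- the group orders, the orbit sizes $120$, and the role of the duality in separating the two classes --- is purely formal.
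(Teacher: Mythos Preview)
Your proposal is correct and considerably more substantive than the paper's own treatment. The paper's entire proof is a one-line citation: it observes that the statement follows directly from Hirschfeld's Theorem 17.5.6(ii) on packings of $PG(3,2)$, and stops there. You instead carry out the reduction to packings explicitly, identify the linear automorphism group as $GL_4(2)\cong A_8$ and the full automorphism group as $S_8$ via the duality $\delta$, and then run an orbit--stabilizer argument through the two $A_8$-conjugacy classes of $PSL(2,7)$ subgroups, fused by the outer automorphism. What your approach buys is genuine insight into \emph{why} the numbers $240=2\cdot 120=2\cdot 20160/168$ arise and why duality swaps the two classes; what the paper's approach buys is brevity. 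You are also honest that the ``completeness and uniqueness'' step (every packing has stabilizer exactly $PSL(2,7)$, and conversely each such subgroup fixes a unique packing) is the crux, and that one ultimately either invokes the classical enumeration or performs a finite verification --- which is precisely the content of the Hirschfeld theorem the paper cites. So your argument and the paper's are compatible: yours is an unpacking of the cited result rather than a different route around it.
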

\begin{proof}
The proof follows directly from \cite[Theorem 17.5.6 ii]{hir}.
\end{proof} 
We can identify the graph $G\big(M_n(q), \Delta)\big)$ and the corresponding Grassmannian $\mathscr{G}(n, 2n, q)$. Then all automorphisms of the distant graph $G(M_2(2), \Delta)$ are linear or superpositions of linear with the automorphisms defined by duality and annihilator mapping; see \cite{pankov}. Automorphisms of the first type fix the two conjugacy classes of partitions and these of the second type exchange them. \newline
Below we write down one partition from each conjugacy class. In both tables the seven members of the partition are maximal cliques of size five, which are labelled as I, II, \ldots, VII. Thereby each point of  the graph $G(M_2(2), \Delta)$ is described in terms of two basis vectors of its  corresponding subspace in $\mathscr{G}(2,4,2)$.
\begin{table}[h]
\centerline{$\begin{tabular}{|c|c|c|c|c|c|c|}\hline
I&II&III&IV&V&VI&VII\\ \hline
 (0010) & (1001) & (1011) & (1010) & (1010) & (1001) & (1001)\\
(0001) & (0101) & (0100) & (0100) & (0110) & (0100) & (0110)\\ \hline
 (1000) & (1000) & (1000) & (1011) & (1000) & (1011) & (1010) \\
(0100) & (0111) & (0110) & (0111) & (0101) & (0101) & (0111)\\ \hline
(1010) & (1010) & (0010) & (0010) & (0010) & (0110) & (0100) \\
(0101) & (0001) & (0101) & (1101) & (1001) & (0001) & (0001)\\  \hline
(1001) & (1110) & (1010) & (0110) & (1110) & (1000) & (0010) \\
(0111) & (1101) & (1001) & (0101) & (0001) & (0010) & (1100)\\ \hline
(1011) & (0100) & (0001) & (1000) & (0100) & (1100) & (1000) \\
(0110) & (0010) & (1100) & (0001) & (0011) & (0011) & (0011)\\  \hline
\end{tabular}$}
\caption{Partition 1}
\label{table1}
\end{table}
\newpage
\begin{table}[ht]\centerline{$
\begin{tabular}{|c|c|c|c|c|c|c|}\hline
I&II&III&IV&V&VI&VII\\ \hline
 (0010) & (1001) & (1000) & (1000) & (1010) & (1011) & (1010)\\
(0001) & (0101) & (0110) & (0101) & (0100) & (0101) & (0111)\\ \hline
 (1000) & (1011) & (1001) & (1010) & (1011) & (1000) & (1001) \\
(0100) & (0100) & (0100) & (0110) & (0111) & (0111) & (0110)\\ \hline
(1010) & (1110) & (1010) & (0010) & (0110) & (1010) & (1000) \\
(0101) & (1101) & (0001) & (1001) & (0101) & (1001) & (0011)\\ \hline
(1011) & (0110) & (0010) & (1110) & (0010) & (0100) & (0010) \\
(0110) & (0001) & (0101) & (0001) & (1101) & (0010) & (1100)\\ \hline
(1001) & (1000) & (0011) & (0100) & (1000) & (0001) & (0100) \\
(0111) & (0010) & (1100) & (0011) & (0001) & (1100) & (0001)\\ \hline
\end{tabular} $}
\caption{Partition 2}
\label{table2}
\end{table}
We study now cliques formed by vertices of any two maximal cliques of the first partition (Table \ref{table1}). We see that there exists exactly one maximum clique with four elements for any two different maximal cliques. As an example, we show edges formed by vertices of cliques I and II (Figure \ref{najwiêksza.klika}). Edges comprised by vertices of maximum clique are represented by thicker line.
\begin{figure}[h]
\centerline{\includegraphics[width=0.65\textwidth]{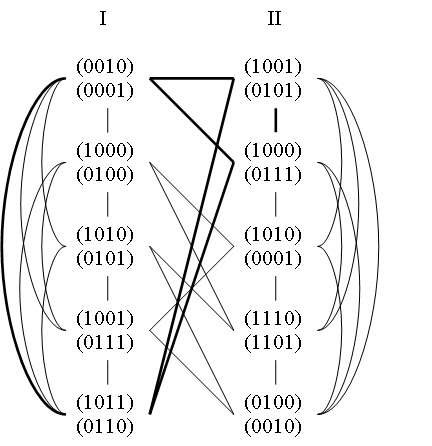}}
\caption{Cliques formed by vertices of cliques I and II}
\label{najwiêksza.klika}
\end{figure}
\newpage
So, for any two of three vertex-disjoint maximal cliques we have one maximum clique  and we checked that three such maximum cliques are of two distinct kinds: either any two of them have one common vertex (Figure \ref{nondisjoint.cliques}) or they are pairwise disjoint (Figure \ref{disjoint.cliques}). The same result can be drawn for the second partition. 
\begin{figure}[h]
\centerline{\includegraphics[width=0.48\textwidth]{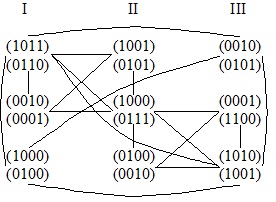}}
\caption{Maximum cliques formed by vertices of cliques I, II and III}
\label{nondisjoint.cliques}
\end{figure}
\begin{figure}[h]
\centerline{\includegraphics[width=0.48\textwidth]{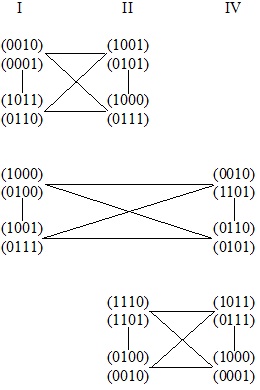}}
\caption{Maximum cliques formed by vertices of  cliques I, II and IV}
\label{disjoint.cliques}
\end{figure}
\newpage
By direct verification we found that vertex-disjoint maximal cliques are points of the projective plane of order $2$. As lines of this plane we take triples of vertex-disjoint maximal cliques of the second kind (Figure \ref{fano}).
\begin{figure}[h]
\centerline{\includegraphics[width=0.55\textwidth]{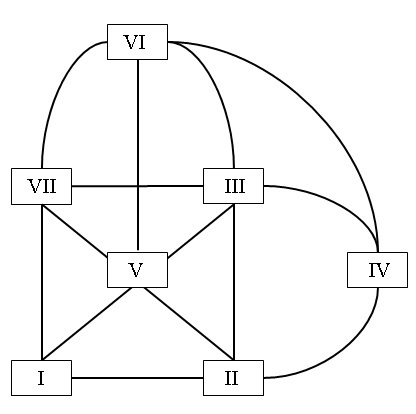}}
\caption{Projective plane of order $2$}
\label{fano}
\end{figure}\newline
Thus we get a simple alternative constraction of the Fano plane described by Hirschfeld in \cite[Theorem 17.5.6]{hir} in projective geometry language.
\newline
There is no proof of the existence of a partition of any graph  $G(M_n(q), \Delta)$. But this problem is well known as an $n$-parallelism in combinatorial design. Sarmiento in \cite{sarmiento} described the partition of the design corresponding to that of  $G(M_3(2), \Delta)$.

\section{The classification of distant graphs}
We start with a characterization of the distant graph of the projective line over any local ring.
\begin{theorem}
\label{local}
Let $R$ be a local ring.
There exists a partition of the distant graph $G(R, \Delta)$ on a sum of $|J|$ vertex-disjoint maximal cliques with  $\omega\bigl(G(R, \Delta)\bigr)=|R/J|+1$.
\end{theorem}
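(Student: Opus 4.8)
The plan is to reduce the problem to the quotient ring $R/J$ by means of Proposition~\ref{kliki.iloraz}, after identifying the graph $G(R/J,\Delta_J)$ explicitly. First I would invoke the structure of finite local rings: since $R$ is local, $R/J$ is a finite division ring, hence by Wedderburn's little theorem a finite field $F(q)$ with $q=|R/J|$.

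Next I would determine $G(R/J,\Delta_J)=G(F(q),\Delta)$. Over a field every nonzero pair is admissible, so the points of $\mathbb{P}(F(q))$ are precisely the $q+1$ one-dimensional subspaces of $^2F(q)$. Two such points $F(q)(a,b)$ and $F(q)(c,d)$ satisfy the distance condition $\left[\begin{array}{cc} a & b \\ c & d\end{array}\right]\in GL_2(F(q))$ exactly when their generating vectors are linearly independent, that is, whenever the two points are distinct. Hence every two distinct vertices are adjacent, so $G(F(q),\Delta)$ is the complete graph $K_{q+1}$.

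Finally I would feed this into Proposition~\ref{kliki.iloraz}. A complete graph is a single maximal clique, so $G(R/J,\Delta_J)=K_{q+1}$ admits the trivial partition into $m=1$ vertex-disjoint maximal clique, with $\omega\bigl(G(R/J,\Delta_J)\bigr)=q+1=|R/J|+1$. Applying Proposition~\ref{kliki.iloraz} with $m=1$ then yields a partition of $G(R,\Delta)$ into $m|J|=|J|$ vertex-disjoint maximal cliques and guarantees the equality $\omega\bigl(G(R,\Delta)\bigr)=\omega\bigl(G(R/J,\Delta_J)\bigr)=|R/J|+1$, which is exactly the assertion. There is no genuine obstacle in this argument; the only step demanding a little care is the identification of $G(F(q),\Delta)$ with $K_{q+1}$ (equivalently, checking that over a field any two distinct points of the projective line are distant), after which Proposition~\ref{kliki.iloraz} carries the whole burden.
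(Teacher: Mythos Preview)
Your argument is correct and follows essentially the same route as the paper: identify $R/J$ as a field so that $G(R/J,\Delta_J)$ is the complete graph on $|R/J|+1$ vertices, then transfer this single-clique partition to $G(R,\Delta)$ via the radical-parallelism machinery. The only cosmetic difference is that you invoke Proposition~\ref{kliki.iloraz} as a black box, whereas the paper unpacks that step directly from Remark~\ref{ilorazowy}; the underlying reasoning is identical.
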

\begin{proof}
If $R$ is local, then $J$ is the maximal ideal of $R$, $R/J$ is a field, and so $G(R/J, \Delta_J)$ is a complete graph with $|R/J|+1$ vertices. According to the connection between $G(R/J, \Delta_J)$ and $G(R, \Delta)$ described in section \ref{construction}, taking into account Remark \ref{ilorazowy} we obtain that vertices $R(a_i, b_i)$ of $G(R, \Delta)$ corresponding to the vertex $R/J(\overline{a}, \overline{b})$ of $G(R/J, \Delta_J)$ are not joined by an edge, while they form an edge with any other vertex of $G(R, \Delta)$. This finishes the proof.
\end{proof}
Let now $v_i^j, u_k^l$ be vertices of $G(R, \Delta)$ and let $V\big(G(R, \Delta)\big), E\big(G(R, \Delta)\big)$ be the sets of vertices and edges of this graph respectively.  We described $G(R, \Delta)$ explicitly in case of a local ring $R$:
$$V\big(G(R, \Delta)\big)=\{v_i^j; i=1, ..., |J|, \ j=1, ..., |R/J|+1\},$$
$$E\big(G(R, \Delta)\big)=\{(v_i^j, u_k^l); j\neq l, \ i,k=1, ..., |J|\}.$$
The sets $\{v_i^j; i=1, ..., |J|\}$, where $j\in\{1,\ldots,|R/J|+1\}$ is
fixed, are maximal anticliques and the sets $\{v_i^j;
j=1,\ldots,|R/J|+1$, where $i\in\{1,\ldots,|J|\}$ is fixed, are maximal
cliques.\newline 

Any finite commutative ring is the direct product of local rings \cite[VI.2]{mcd}. Thus the distant graph of the projective line over any finite commutative ring is known by the above and Theorem \ref{product}.\newline
Every finite ring is isomorphic to the direct product of rings of prime power order \cite[I.1]{mcd}. Hence the distant graph of the projective line over a finite ring can be also described as the tensor product of the distant graphs of the projective lines over rings of prime power order.\newline 
We classify below distant graphs $G(R, \Delta)$, where  $R$ is an indecomposable ring up to order $p^5, p$ prime.
We use some facts and the notations that were established in \cite{p5}. Namely, any finite ring can be represented as $R=S\oplus M$, where $S=\bigoplus_{i=1}^mR_i, \ R_i$ are primary rings and $M$ is a bimodule over the ring $S$. $M$ is also an additive subgroup of $J$, so $M\subset J$ and we thus get $R/J\simeq S/J$.\newline
In the case of $p, \ p^2$ for any $p$ we get complete graphs of order $p+1, p+2$ and the graph $G(R, \Delta)$, where $R$ is local and $|R|=p^2$, $|J|=p$. 
If $|R|=p^3$ then we have three graphs $G(R, \Delta)$ for any $p$: complete graph of order $p^3+1$, the graph where $R$ is local and $|J|=p^2$ and the graph of the projective line over the ring of lower triangular $2\times 2$-matrices over a field $F(p)$, which is a sum of $p^2+p$ vertex-disjoint maximal cliques with  $\omega\bigl(G(R, \Delta)\bigr)=p+1$.
\begin{theorem}
Let $R$ be an indecomposable ring of order $p^4, \ p$ prime. There are exactly five nonisomorphic graphs $G(R, \Delta)$ for any $p$. These are: 
\begin{enumerate}
\item the complete graph of order  $p^4+1$;
\item the graph on the projective line over local ring with $|J|=p^2$;
\item the graph on the projective line over local ring with $|J|=p^3$;
\item the graph which is a sum of $p^3+p^2$ vertex-disjoint maximal cliques with $\omega\bigl(G(R, \Delta)\bigr)=p+1$, $(R/J=F(p)\times F(p))$;
\item $G(M_2(p), \Delta)$ (described in section {\rm \ref{construction}}).
\end{enumerate}
\end{theorem}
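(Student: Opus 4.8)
The plan is to reduce everything to Theorem~\ref{izo.graph}, which asserts that the isomorphism class of $G(R,\Delta)$ depends only on the order $|R|$ together with the decomposition of $R/J$ into full matrix rings over finite fields, taken up to permutation of the factors and up to isomorphism or anti-isomorphism of each factor. Since here $|R|=p^{4}$ is fixed, the whole classification collapses to the combinatorial task of listing the admissible types of $R/J$ for an \emph{indecomposable} ring $R$ of order $p^{4}$. As $|R|$ is a power of $p$, every residue field has characteristic $p$, so by the structure theory of finite semiperfect rings recalled in Section~\ref{construction} we may write $R/J\simeq\prod_{i=1}^{m}M_{n_i}(p^{e_i})$, and setting $r:=\sum_i e_i n_i^{2}$ we have $|R/J|=p^{r}$ and $|J|=p^{4-r}$ with $r\le 4$.

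First I would enumerate, once and for all (uniformly in $p$), the positive-integer solutions of $\sum_i e_i n_i^{2}\le 4$; there are only finitely many multisets $\{(n_i,e_i)\}$. Each candidate is then tested against two structural constraints. The first is Nakayama's lemma applied in the local case $R/J=F(p^{e})$: if $J\neq 0$ then $J/J^{2}$ is a nonzero $F(p^{e})$-vector space, so $p^{e}\mid|J|=p^{4-e}$, forcing $e\le 2$; this discards the local type $F(p^{3})$. The second constraint comes from indecomposability via the Peirce decomposition. Lifting a complete orthogonal family of idempotents $e_1,\dots,e_m$ through $R\to R/J$, every off-diagonal bimodule $e_iRe_j$ ($i\neq j$) lies in $J$, and when nonzero it carries, after quotienting by the one-sided radical action, a nonzero module structure over $F(p^{e_i})$ and over $F(p^{e_j})$, so $p^{e_i}\mid|e_iRe_j|$ and $p^{e_j}\mid|e_iRe_j|$. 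Moreover $R$ indecomposable means the ``linking'' graph on $\{1,\dots,m\}$ (join $i,j$ when $e_iRe_j\neq 0$ or $e_jRe_i\neq 0$) is connected, since otherwise a proper sub-sum of the $e_i$ is a nontrivial central idempotent and $R$ splits. Applying this: the type $F(p)\times F(p^{2})$ (with $|J|=p$) is impossible, because linking the two blocks needs a nonzero off-diagonal bimodule of order divisible by $p^{2}>|J|$; the type $F(p)^{3}$ (with $|J|=p$) is impossible, because connectivity of three blocks requires at least two nonzero off-diagonal bimodules, whence $|J|\ge p^{2}$; and every type with $m\ge 2$ and $r=4$ (so $J=0$), such as $F(p^{2})^{2}$, $F(p)\times F(p^{3})$ or $F(p)^{4}$, is excluded because a semisimple $R$ with several blocks is decomposable.

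The candidates that survive are exactly $F(p^{4})$, $F(p^{2})$, $F(p)$, $F(p)\times F(p)$ and $M_2(p)$, and I would match them to the five listed graphs as follows. For $F(p^{4})$ one has $J=0$ and $R=F(p^{4})$ is a field, so $G(R,\Delta)$ is the complete graph $K_{p^{4}+1}$ (item 1); for $M_2(p)$ one has $J=0$ and $R$ is simple, giving $G(M_2(p),\Delta)$ (item 5). The two local types $F(p^{2})$ ($|J|=p^{2}$) and $F(p)$ ($|J|=p^{3}$) yield items 2 and 3 by Theorem~\ref{local}. The type $F(p)\times F(p)$ has $|R/J|=p^{2}$ and $|J|=p^{2}$, and the corollary treating the case $R/J\cong F(q)^{n}$ (here $q=p$, $n=2$) gives a partition into $(p+1)^{n-1}|J|=p^{3}+p^{2}$ vertex-disjoint maximal cliques of size $\omega=p+1$, which is item 4. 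Existence of an indecomposable ring realizing each type, for every $p$, is guaranteed by the classification of rings of order $p^{4}$ in \cite{p4} (concretely $F(p^{4})$; $F(p^{2})[t]/(t^{2})$; $\mathbb{Z}/p^{4}\mathbb{Z}$; the ring $\{\bigl(\begin{smallmatrix}a&0\\ n&b\end{smallmatrix}\bigr):a,b\in F(p),\ n\in F(p)^{2}\}$; and $M_2(p)$). Finally, pairwise non-isomorphism follows from the ``only if'' direction of Theorem~\ref{izo.graph}: all five have the same order $p^{4}$ but mutually non-matching residue decompositions, so no permutation together with isomorphisms or anti-isomorphisms of the factors can identify any two of them.

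The main obstacle is the second paragraph: making the enumeration genuinely exhaustive and rigorously ruling out the three borderline types $F(p^{3})$, $F(p)\times F(p^{2})$ and $F(p)^{3}$, which superficially satisfy $r\le 4$ but are killed only through the interplay of Nakayama's lemma, the Peirce divisibilities $p^{e_i}\mid|e_iRe_j|$, and the connectivity forced by indecomposability. Everything else --- reading off the graphs and checking distinctness --- is then routine via Theorems~\ref{izo.graph} and~\ref{local}.
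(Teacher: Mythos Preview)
Your argument is correct and takes a more self-contained route than the paper. The paper's own proof is a one-line appeal to the full classification of rings of order $p^{4}$ in \cite{p4,p5}: from that list one simply reads off the possible residue rings $R/J$ and then applies Theorems~\ref{product} and~\ref{local} together with the connection between $G(R,\Delta)$ and $G(R/J,\Delta_J)$. You instead determine the admissible residue types \emph{a priori}, using only idempotent lifting, the Peirce decomposition, Nakayama's lemma, and the connectivity criterion for indecomposability to eliminate $F(p^{3})$, $F(p)\times F(p^{2})$ and $F(p)^{3}$; the external classification \cite{p4} is invoked only at the end to exhibit a witness for each surviving type. The payoff of your route is that it explains \emph{why} exactly these five residue types occur and would transfer to other orders without re-reading a classification table; the paper's route is shorter but treats the structure of indecomposable $p^{4}$-rings as a black box. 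Your explicit use of Theorem~\ref{izo.graph} in both directions---to reduce the problem to residue types and then to separate the five graphs---is also a cleaner organizing principle than the paper states here.
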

\begin{proof}
The proof is straightforward from the classification of rings of order $p^4$ in \cite{p4, p5},  Theorems \ref{product}, \ref{local} and the connection between $G(R, \Delta)$ and $G(R/J, \Delta_J)$.
\end{proof}
\begin{theorem}
Let $R$ be an indecomposable ring of order $p^5, \ p$ prime. There are exactly six nonisomorphic graphs $G(R, \Delta)$ for any $p$. These are: 
\begin{enumerate}
\item \label{1} the complete graph of order  $p^5+1$;
\item\label{2} the graph on the projective line over local ring with $|J|=p^3$;
\item\label{3} the graph on the projective line over local ring with $|J|=p^4$;
\item\label{4} the graph which is a sum of $p^4+p^3$ vertex-disjoint maximal cliques with $\omega\bigl(G(R, \Delta)\bigr)=p+1$, $(R/J=F(p)\times F(p))$;
\item\label{5} the graph which is a sum of $p^4+p^2$ vertex-disjoint maximal cliques with $\omega\bigl(G(R, \Delta)\bigr)=p+1$, $(R/J=F(p)\times F(p^2))$;
\item\label{6} the graph which is a sum of $(p+1)^2$ vertex-disjoint maximal cliques with $\omega\bigl(G(R, \Delta)\bigr)=p+1$, $(R/J=F(p)\times F(p)\times F(p))$;
\end{enumerate}
\end{theorem}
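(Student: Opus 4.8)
The plan is to reduce the entire classification to the structure of the semisimple quotient $R/J$, exactly as in the preceding cases. By Theorem~\ref{izo.graph} the isomorphism type of $G(R,\Delta)$ depends only on the order $|R|=p^5$ together with the multiset of full matrix rings appearing in the Wedderburn decomposition $R/J\simeq M_{n_1}(q_1)\times\cdots\times M_{n_k}(q_k)$, taken up to permutation and (anti)isomorphism of the factors. So the whole problem becomes: list all such decompositions that occur for an \emph{indecomposable} $R$ of order $p^5$, record $|J|=p^5/|R/J|$ in each case, and then read off the corresponding graph using Theorem~\ref{local}, Proposition~\ref{kliki.produkt} and Proposition~\ref{kliki.iloraz}.

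First I would trim the possible quotients by three structural constraints. (i) No factor $M_n(q)$ with $n\geq2$ can occur: lifting a full system of matrix units through the idempotents of the semiperfect ring $R$ would give $R\simeq M_n(S)$ for some ring $S$, so $|R|=|S|^{n^2}$ would be a power of $p$ whose exponent is a multiple of $n^2\geq4$, incompatible with $|R|=p^5$. Hence $R/J$ is a product of finite fields $F(p^{f_i})$. (ii) If $R$ is local with residue field $F(p^{f})$, then each $J^i/J^{i+1}$ is an $F(p^{f})$-vector space, so $|R|$ is a power of $p^{f}$ and hence $f\mid5$. (iii) When $R/J$ has several field factors, indecomposability forces these simple summands to be linked through $J/J^2$, and a nonzero linking bimodule between $F(p^{f_i})$ and $F(p^{f_j})$ is a one-sided vector space over each field, hence of order at least $\max(p^{f_i},p^{f_j})$; so the factors can be fused into a single indecomposable ring only when $|J|$ is large enough to support a connected configuration on them. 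Applying (i)--(iii) against the classification of rings of order $p^5$ in \cite{p4,p5} singles out the admissible radical quotients recorded in the statement.

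Next I would compute each graph. When $J=0$ the quotient is the field $F(p^5)$ and $G(R,\Delta)$ is the complete graph $K_{p^5+1}$. For a local ring, Theorem~\ref{local} yields a partition into $|J|$ vertex-disjoint maximal cliques with $\omega=|R/J|+1$. For a quotient that is a product of fields, each factor $F(q_i)$ contributes the complete distant graph $K_{q_i+1}$, i.e.\ a single maximal clique of size $s_i=q_i+1$; Proposition~\ref{kliki.produkt} then gives a clique partition of the tensor product with $\omega=\min_i(q_i+1)=p+1$ and with $\tfrac{m_1\cdots m_k\,s_1\cdots s_k}{s}$ cliques, and Proposition~\ref{kliki.iloraz} multiplies the number of cliques by $|J|$ while preserving $\omega$. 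This produces the clique numbers and clique counts displayed in each item. Finally, since the resulting graphs have either different clique numbers or, when the clique numbers agree, different numbers of maximal cliques, Theorem~\ref{izo.graph} confirms that they are pairwise non-isomorphic.

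The step I expect to be the real obstacle is the exhaustiveness of the enumeration in the second paragraph, i.e.\ being certain that the classification of \cite{p4,p5} produces no indecomposable ring of order $p^5$ whose radical quotient escapes the admissible list, and, conversely, that each listed quotient is genuinely realized by some indecomposable ring of this order. The delicate part is constraint (iii): one must balance the available radical size $|J|$ against the minimal orders of the bimodules needed to connect the simple factors, and it is precisely this bookkeeping that decides, for example, whether quotients such as $F(p^2)\times F(p^2)$ (which would force $|J|=p$) or four or more copies of $F(p)$ can be attained at the small radical orders imposed by $|R|=p^5$.
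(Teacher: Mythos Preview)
Your overall strategy mirrors the paper's: determine the possible quotients $R/J$ from the classification in \cite{p5}, then read off each distant graph via Theorem~\ref{local}, Theorem~\ref{product}, and Propositions~\ref{kliki.iloraz}--\ref{kliki.produkt}. The paper does this by simply quoting the list of ring parts $S$ in the representation $R=S\oplus M$ from \cite{p5}; you instead supply independent structural constraints (i)--(iii), which is a genuine addition, and the reasoning behind each of them is sound.

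The gap is that your own constraint (ii) is incompatible with item~\ref{2} of the statement you set out to prove. If $R$ is local of order $p^{5}$ with residue field $F(p^{f})$, then every layer $J^{i}/J^{i+1}$ is an $R/J$-vector space, so $|R|$ is a power of $p^{f}$ and hence $f\in\{1,5\}$. This forces $|J|\in\{0,\,p^{4}\}$ for a local ring of order $p^{5}$; a local ring with $|J|=p^{3}$ (equivalently $|R/J|=p^{2}$) cannot exist. Consequently the sentence ``applying (i)--(iii) \ldots\ singles out the admissible radical quotients recorded in the statement'' cannot hold: your constraints yield only five quotient types, not six. A second symptom of the same mismatch appears in item~\ref{6}: your clique-counting formula gives $\tfrac{(p+1)^{3}}{p+1}\cdot|J|=(p+1)^{2}p^{2}$ maximal cliques, not the $(p+1)^{2}$ recorded there. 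In both places the discrepancy is between your (correct) computations and the asserted list, so you must reconcile your enumeration with the stated outcome rather than assume they agree; as written, the proposal ends up establishing something different from what it claims.
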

\begin{proof}
The proof for \ref{1}., \ref{2}., \ref{3}. is similar to the proof of the previous theorem. It follows immediately from Theorem \ref{local} and the classification of rings of order $p^5$ in \cite{p5}.\newline
By characterization of the ring part $S$ in representation $R=S\oplus M$ in \cite{p5} we obtain three type of rings with the following ring part:
\begin{enumerate}[(a)]
\item $R_1\times R_2$, where $R_1, R_2$ are
\begin{itemize}
\item proper local rings of order $p^2$, $p^3$ or a field $F(p)$ such that $|R_1||R_2|=p^4$;
\item proper local rings of order $p^2$ or a field $F(p)$ such that $|R_1||R_2|=p^3$;
\item fields $F(p)$.
\end{itemize}
In this case $R/J=F(p)\times F(p)$.
\item $F(p^2)\times F(p)$ or  $F(p)\times F(p^2)$. It is necessary to explain how these two ring parts represent two distinct rings. And that is, they have different module parts: $_{F(p^2)}M_ {F(p)}$ and $_{F(p)}M_ {F(p^2)}$ respectively. For all these rings $R/J=F(p)\times F(p^2)$.
\item $F(p)\times F(p)\times F(p)$ and also $R/J=F(p)\times F(p)\times F(p)$.
\end{enumerate}
The connection  between $G(R, \Delta)$ and $G(R/J, \Delta_J)$ and Theorems \ref{product} and \ref{local} now completes the proof.
\end{proof}

When this paper was finished we became aware of the recent preprint by Silverman (arXiv:1612.08085) which in part is addressed to the same topic.

\newpage


\noindent {University of Warmia and Mazury in Olsztyn\\ Faculty of Mathematics and Computer Science\\ S{\l}oneczna 54 Street\\ 10-710 Olsztyn, Poland\\{\ttfamily edytabartnicka@wp.pl, amatras@uwm.edu.pl}}
\end{document}